\newtheorem{theorem}{Theorem}[section]
\newtheorem{lemma}[theorem]{Lemma}
\newtheorem{remark}[theorem]{Remark}
\newtheorem{corollary}[theorem]{Corollary}
\date{}
\begin{document}
\title{Permanental polynomials of skew adjacency matrices of oriented graphs\thanks{This work is supported by NSFC (Grant No. 11501050) and the Fundamental Research Funds for the Central Universities (Grant Nos. 310812151003, 310812152002).}}

\author{Shunyi Liu\thanks{Corresponding author.}\\
{\small College of Science, Chang'an University}\\
{\small Xi'an, Shaanxi 710064, P.R. China}\\
{\small E-mail: liu@chd.edu.cn}\\
\\
Heping Zhang\\
{\small School of Mathematics and Statistics, Lanzhou University}\\
{\small Lanzhou, Gansu 730000, P.R. China}\\
{\small E-mail: zhanghp@lzu.edu.cn}}

\maketitle

\begin{abstract}
Let $G^\sigma$ be an orientation of a simple graph $G$. In this paper, the permanental polynomial of an oriented graph $G^\sigma$ is introduced. The coefficients of the permanental polynomial of $G^\sigma$ are interpreted in terms of the graph structure of $G^\sigma$, and it is proved that all orientations $G^\sigma$ of $G$ have the same permanental polynomial if and only if $G$ has no even cycles. Furthermore, the roots of the permanental polynomial of $G^\sigma$ are studied.
\end{abstract}
\vspace{2mm}

{\bf Keywords}: Permanental polynomial, Skew adjacency matrix, Oriented graph.

{\bf AMS subject classification 2010:} 05C31, 05C50, 15A15.

%%%%%%%%%%%%%%%%%%%%%%%%%%%%%%%%%%%%%%%%%%%%%%%%%%%%%%%%%%%%%
\section{Introduction}\label{intro-sec}

Let $G$ be a simple graph with vertex set $V(G)$ = $\{v_1,\dots,v_n\}$ and edge set $E(G)$. The adjacency matrix of $G$ is the $n \times n$ matrix $A(G)=(a_{ij})$, where $a_{ij}=1$ if $\{v_i,v_j\}\in E(G)$ and $a_{ij}=0$ otherwise. An \emph{oriented graph} $G^\sigma$ is a simple graph $G$ with an orientation $\sigma$, which assigns to each edge a direction so that $G^\sigma$ becomes a directed graph. Both $\sigma$ and $G^\sigma$ are called \emph{orientations} of $G$. The \emph{skew adjacency matrix} of $G^\sigma$, denoted by $A_s(G^\sigma)$, is defined to be the $n\times n$ matrix $(a_{ij})$ whose ($i$,$j$)-entry $a_{ij}$ satisfies
\[
 a_{ij}=
 \left\{
  \begin{array}{ll}
   1,  &\mbox{if $(v_i,v_j)\in E(G^\sigma)$,}\\
   -1, &\mbox{if $(v_j,v_i)\in E(G^\sigma)$,}\\
   0,  &\mbox{otherwise.}
  \end{array}
 \right.
\]

It is known that the skew adjacency matrix plays an important role in enumeration of perfect matchings, since the square of the number of perfect matchings of a graph $G$ with a Pfaffian orientation $G^\sigma$ is equal to the determinant of $A_s(G^\sigma)$ (see~\cite{LoPl86} for more details). There are few studies on skew adjacency matrices of oriented graphs. Recently, the characteristic polynomial of $A_s(G^\sigma)$ was considered~\cite{CaCF12}. In particular, the skew energy~\cite{AdBS10} of an oriented graph $G^\sigma$ (the sum of the absolute values of the eigenvalues of $A_s(G^\sigma)$) has attracted a lot of interest of researchers. Analogously, we will define the permanental polynomial of an oriented graph and investigate the coefficients and roots of this polynomial.

The \emph{permanent} of an $n\times n$ matrix $M$ with entries
$m_{ij}$ $(i,j=1, 2,\dots, n)$ is defined by
\[
\mathrm{per}(M) = \sum_{\pi\in S_n}\prod_{i=1}^{n}m_{i\pi(i)},
\]
where $S_n$ is the symmetric group on $n$ elements. The permanent is defined similarly to the determinant. However, no efficient algorithm for computing the permanent is known, while the determinant can be calculated using Gaussian elimination. More precisely, Valiant~\cite{Val79} has shown that computing the permanent is $\#$P-complete even when restricted to (0,1)-matrices. Permanents have important applications in combinatorics and graph theory in connection with certain enumeration and extremal problems.

Let $G^\sigma$ be an orientation of a simple graph $G$ and $A_s(G^\sigma)$ the skew adjacency matrix of $G^\sigma$. The \emph{permanental polynomial} of $G^\sigma$, denoted by $\pi(G^\sigma,x)$, is defined as the permanent of the characteristic matrix of $A_s(G^\sigma)$, i.e.,
\[
\pi(G^\sigma,x) = \mathrm{per}(xI-A_s(G^\sigma)).
\]
The roots of $\pi(G^\sigma,x)$ are called the \emph{permanental roots} of $G^\sigma$.

The \emph{permanental polynomial} of a graph $G$, denoted by $\pi(G,x)$, is defined by $\mathrm{per}(xI-A(G))$, and the roots of $\pi(G,x)$ are called the \emph{permanental roots} of $G$. The permanental polynomial of a graph was first studied in mathematical literature by Merris et al.~\cite{MeRW81}, and the study of this polynomial in chemical literature was started by Kasum et al.~\cite{KaTG81}. Yan and Zhang~\cite{YaZh04} proved that for a bipartite graph $G$ containing no even subdivisions of $K_{2,3}$, there exists an orientation $G^\sigma$ of $G$ such that $\pi(G,x)$ = $\det(xI-A_s(G^\sigma))$.

The rest of the paper is organized as follows. In Section~\ref{S:coefficients}, we obtain the Sachs form coefficient formula of $\pi(G^\sigma,x)$, which expresses the coefficients of $\pi(G^\sigma,x)$ in terms of the graph structure of $G^\sigma$. We generalize this formula to weighted oriented graphs, and provide a graph-theoretic method to compute the permanent and permanental polynomial of a general skew symmetric matrix. Section~\ref{S:characterization} gives a characterization on graphs whose all orientations have the same permanental polynomial. In Section~\ref{S:permanental_root}, the properties of roots of $\pi(G^\sigma,x)$ are studied.

%%%%%%%%%%%%%%%%%%%%%%%%%%%%%%%%%%%%%%%%%%%%%%%%%%%%%%%%%%%%%%%%%%%%%%%%%%%%%
\section{Coefficients of the permanental polynomial}\label{S:coefficients}
In this section, we obtain the coefficients of the permanental polynomial of an oriented graph in terms of the graph structure, and generalize this result to weighted oriented graphs.

Let $S_n$ be the symmetric group on $n$ elements. It is well-known that every permutation in $S_n$ is a product of disjoint cycles. Denote by $\mathcal{E}(n)$ the set of all permutations in $S_n$ with all cycles having even length.

\begin{lemma}\label{L:skew_permanent}
Let $A=(a_{ij})$ be an $n\times n$ skew symmetric matrix. Then
\[
\mathrm{per}(A)=\sum_{\pi\in \mathcal {E}(n)}a_{1\pi(1)}a_{2\pi(2)}\dots a_{n\pi(n)}.
\]
\end{lemma}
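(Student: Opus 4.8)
The plan is to start from the defining sum $\mathrm{per}A=\sum_{\pi\in S_n}\prod_{i=1}^n a_{i\pi(i)}$ and show that every term indexed by a permutation $\pi\notin\mathcal{E}(n)$ either vanishes or cancels in pairs, leaving exactly the sum over $\pi\in\mathcal{E}(n)$. First I would decompose an arbitrary $\pi\in S_n$ into its disjoint cycles. Any fixed point $i$ contributes a factor $a_{ii}=0$ (skew symmetry forces zero diagonal), so permutations with a fixed point drop out immediately; this already handles all $1$-cycles. Similarly, a $2$-cycle $(i\,j)$ contributes $a_{ij}a_{ji}=-a_{ij}^2$, which is a perfectly good nonzero contribution in general — so $2$-cycles are allowed, consistent with $\mathcal{E}(n)$.

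The key step is to deal with odd cycles of length $\ge 3$. Here I would use the standard trick of pairing $\pi$ with a "partial reversal." Given $\pi$ with a cycle decomposition containing at least one odd cycle, fix a canonical choice of odd cycle $C$ (say, the one containing the smallest index among all vertices lying on odd cycles), and let $\pi'$ be the permutation obtained from $\pi$ by reversing exactly that one cycle $C$, leaving all other cycles untouched. Reversing a cycle $(i_1\,i_2\,\dots\,i_\ell)$ to $(i_1\,i_\ell\,\dots\,i_2)$ replaces the factors $a_{i_1 i_2}a_{i_2 i_3}\cdots a_{i_\ell i_1}$ by $a_{i_1 i_\ell}a_{i_\ell i_{\ell-1}}\cdots a_{i_2 i_1}=(-1)^\ell\,a_{i_1 i_2}a_{i_2 i_3}\cdots a_{i_\ell i_1}$ using $a_{ji}=-a_{ij}$ on each of the $\ell$ edges. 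Since $\ell$ is odd, the term for $\pi'$ is the negative of the term for $\pi$. The map $\pi\mapsto\pi'$ is an involution on the set of permutations having at least one odd cycle of length $\ge 3$ (because the canonical odd cycle is selected by an index set that is invariant under reversing that cycle, and a length-$\ge 3$ cycle is not equal to its own reversal), and it has no fixed points there, so it partitions that set into pairs whose contributions cancel.

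Putting the pieces together: the permutations with a fixed point contribute $0$; the permutations with an odd cycle of length $\ge 3$ cancel in pairs; what remains is precisely $\mathcal{E}(n)$, the permutations all of whose cycles have even length. Hence $\mathrm{per}A=\sum_{\pi\in\mathcal{E}(n)}a_{1\pi(1)}\cdots a_{n\pi(n)}$, as claimed. The main obstacle is purely bookkeeping: one must pin down a genuinely canonical choice of the odd cycle to be reversed so that $\pi\mapsto\pi'$ is a well-defined fixed-point-free involution — selecting the odd cycle through the globally smallest "bad" index works because that index, and hence the cycle it lies on as a set, is preserved under the reversal. Everything else is the one-line computation $(-1)^\ell=-1$ for odd $\ell$ together with the observation that zero diagonal kills fixed points.
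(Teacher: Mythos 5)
Your proposal is correct and follows essentially the same argument as the paper: fixed points are killed by the zero diagonal, and permutations containing an odd cycle of length at least $3$ are paired off by reversing the odd cycle with the smallest least element, which negates the term since $(-1)^\ell=-1$ for odd $\ell$. Your explicit remarks that the canonical cycle is preserved as a set under reversal (so the map is an involution) and that a cycle of length $\ge 3$ differs from its reversal are details the paper leaves implicit, but the route is the same.
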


\begin{proof}
Let $\pi=r_1r_2\dots r_t$ be a permutation in $S_n\backslash\mathcal{E}(n)$, where $r_1$, $r_2$, $\dots$, $r_t$ are the disjoint cycles of $\pi$. Since $A$ is a skew symmetric matrix, $a_{ii}$ = 0 for $i = 1, 2, \dots, n$. Thus $\pi$ will contribute 0 to $\mathrm{per}A$ if $\pi$ has a fixed point. So we assume that $\pi$ is fixed-point-free. We define the least element of a cycle $r_i$ of $\pi$ to be the least element of \{1,2,\dots,$n$\} in $r_i$. Since $\pi\in S_n\backslash\mathcal{E}(n)$, $\pi$ contains an odd cycle. We obtain $\pi^\prime$ from $\pi$ by only reversing the odd cycle with the smallest least element. It is easy to see that $(\pi^{\prime})^{\prime}=\pi$ and $\prod_{i=1}^n a_{i\pi(i)}$ = $-\prod_{i=1}^n a_{i\pi^{\prime}(i)}$. Thus we have partitioned the fixed-point-free permutations in $S_n\backslash\mathcal{E}(n)$ into pairs such that each pair contributes 0 to $\mathrm{per}(A)$. This completes the proof.
\end{proof}

Let $G^\sigma$ be an orientation of a graph $G$ and $C$ an even cycle in $G$. We say that $C$ is {\em oddly} (resp. {\em evenly}) {\em oriented} if for either choice of direction of traversal around $C$, the number of oriented edges of $C$ whose orientation agrees with the direction of traversal is odd (resp. even). A {\em Sachs graph} is an undirected graph in which each component is a single edge or a cycle.

\begin{theorem}\label{T:coeffient_skew}
Suppose $\pi(G^\sigma,x)$ = $\sum_{k=0}^{n}a_{k}x^{n-k}$. Then
\[
  a_k=\sum_{U\in\mathcal{E}\mathscr{U}_k}(-1)^{m(U)+c^{-}(U)}\,2^{c(U)}, \quad \mbox{if $k$ is even},
\]
and $a_k=0$ otherwise, where $\mathcal{E}\mathscr{U}_k$ is the set of all Sachs subgraphs of $G$ on $k$ vertices with no odd cycles, $m(U)$ is the number of single edges of $U$, $c(U)$ is the number of cycles of $U$ and $c^{-}(U)$ is the number of oddly oriented cycles of $U$ relative to $G^\sigma$.
\end{theorem}

\begin{proof}
Let us first consider the permanent of $A_s(G^\sigma)$. By Lemma~\ref{L:skew_permanent}, we have
\[
\mathrm{per}(A_s(G^{\sigma}))=\sum_{\pi\in \mathcal {E}(n)}a_{1\pi(1)}a_{2\pi(2)}\dots a_{n\pi(n)}.
\]

Let $\pi$ = $r_1r_2\dots r_t$ be a permutation in $\mathcal {E}(n)$. The term $a_{1\pi(1)}a_{2\pi(2)}\dots a_{n\pi(n)}$ is nonzero if and only if $a_{i\pi(i)}\neq0$ for $i=1,2,\dots,n$, i.e., $(v_i,v_{\pi(i)})$ or $(v_{\pi(i)},v_i)$ is an arc of $G^\sigma$. If $a_{1\pi(1)}a_{2\pi(2)}\dots a_{n\pi(n)}\neq0$, then this term determines a Sachs subgraph $U\in\mathcal{E}\mathscr{U}_n$ in which the components isomorphic to the complete graph $K_2$ are determined by the transpositions among the $r_i$, and the even cycles are determined by the remaining $r_i$. Conversely, $U$ arises from $2^{c(U)}$ permutations, namely $r_1^{\pm1}r_2^{\pm1}\dots r_{c(U)}^{\pm1}r_{c(U)+1}\dots r_{t}$, where $r_1,r_2,\dots,r_{c(U)}$ are the $r_i$ of length greater than 2. It is easy to see that a single edge contributes $-1$, an oddly oriented even cycle contributes $-1$, and an evenly oriented even cycle contributes 1 to the term $a_{1\pi(1)}a_{2\pi(2)}\dots a_{n\pi(n)}$, respectively. Thus we have
\[
\mathrm{per}(A_s(G^{\sigma}))=\sum_{U\in\mathcal{E}\mathscr{U}_n}(-1)^{m(U)+c^{-}(U)}2^{c(U)}.
\]

If $n$ = $\left|V(G)\right|$ is odd, then $\mathcal {E}(n)$ = $\emptyset$, and consequently $\mathrm{per}(A_s(G^{\sigma}))$ = 0.

It is known that $(-1)^{k}a_k$ equals the sum of all $k\times k$ principal subpermanents of $A_s(G^{\sigma})$. Note that there is a one-to-one correspondence between the set of these principal subpermanents and the set of all induced subgraphs of $G$ having exactly $k$ vertices. Applying the result obtained above to each of the $n \choose k$ principal subpermanents and summing, we have
\[
  (-1)^{k}a_k=\sum_{U\in\mathcal{E}\mathscr{U}_k}(-1)^{m(U)+c^{-}(U)}\,2^{c(U)}.
\]
Therefore, $a_k=\sum_{U\in\mathcal{E}\mathscr{U}_k}(-1)^{m(U)+c^{-}(U)}\,2^{c(U)}$ if $k$ is even, and $a_k=0$ otherwise.
\end{proof}

Theorem~\ref{T:coeffient_skew} can be extended to weighted oriented graphs. Suppose that $G$ is a simple graph with vertex set $V(G)$ = $\{v_1,v_2,\dots,v_n\}$. Let $G_\omega^\sigma$ be an oriented graph $G^{\sigma}$ with a weight function $\omega$, which assigns to each arc $(v_i,v_j)$ a weight $\omega_{ij}$ so that $G_\omega^\sigma$ becomes a weighted oriented graph. The \emph{generalized skew adjacency matrix} of $G_\omega^\sigma$  is defined to be the $n\times n$ matrix $A_s(G_\omega^\sigma)=(a_{ij})$, whose $(i,j)$-entry $a_{ij}$ satisfies
\[
 a_{ij}=
 \left\{
  \begin{array}{ll}
   \omega_{ij}, &\mbox{if $(v_i,v_j)\in E(G_\omega^\sigma)$,}\\
   -\omega_{ji}, &\mbox{if $(v_j,v_i)\in E(G_\omega^\sigma)$,}\\
   0, &\mbox{otherwise.}
  \end{array}
 \right.
\]

Following similar arguments as in the proof of Theorem \ref{T:coeffient_skew}, we can obtain the coefficients of the permenental polynomial of a weighted oriented graph. If $G$ is a weighted undirected graph and $U$ is a Sachs subgraph of $G$, let
\[
\prod(U)=\prod_{e\in E(U)}(w(e))^{\zeta(e;U)},
\]
where $E(U)$ is the set of edges of $U$, $w(e)$ is the weight of $e$, and
\[
\zeta(e;U)=
 \left\{
  \begin{array}{ll}
   1, &\mbox{if $e$ is contained in some cycle of $U$,}\\
   2, &\mbox{otherwise.}
  \end{array}
 \right.
\]

\begin{theorem}\label{T:coeffient_skew_weight}
Let $G_\omega^\sigma$ be a weighted oriented graph of a simple graph $G$ and $A_s(G_\omega^\sigma)$ the generalized skew adjacency matrix of $G_\omega^\sigma$. Suppose $\pi(G_\omega^\sigma,x)$ = $\mathrm{per}(xI-A_s(G_\omega^\sigma))$ = $\sum_{k=0}^{n}a_{k}x^{n-k}$. Then $a_k$ = 0 if $k$ is odd, and
\begin{equation}\label{E:poly_weight_oriented}
  a_k = \sum_{U\in\mathcal{E}\mathscr{U}_k}(-1)^{m(U)+c^{-}(U)}2^{c(U)}\prod(U),\quad \mbox{if $k$ is even,}
\end{equation}
where $\mathcal{E}\mathscr{U}_k$, $m(U)$, $c(U)$ and $c^{-}(U)$ are defined as in Theorem \ref{T:coeffient_skew}. In particular, we have
\begin{equation}\label{E:per_weight_oriented}
  \mathrm{per}(A_s(G_\omega^\sigma))=\sum_{U\in\mathcal{E}\mathscr{U}_n}(-1)^{m(U)+c^{-}(U)}2^{c(U)}\prod(U).
\end{equation}
\end{theorem}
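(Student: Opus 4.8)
The plan is to mimic the proof of Theorem \ref{T:coeffient_skew} almost verbatim, inserting a bookkeeping factor for the weights. First, I would establish the weighted analogue of Lemma \ref{L:skew_permanent}: if $A = (a_{ij})$ is any $n \times n$ skew symmetric matrix (in particular $A = A_s(G_\omega^\sigma)$), then $\mathrm{per}\,A = \sum_{\pi \in \mathcal{E}(n)} a_{1\pi(1)} a_{2\pi(2)} \cdots a_{n\pi(n)}$. This is in fact exactly Lemma \ref{L:skew_permanent} as stated, since its proof only uses skew symmetry (the pairing argument reversing the odd cycle with the smallest least element, and the vanishing of diagonal entries), so no new work is needed here — I would simply cite it.

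Next I would expand $\mathrm{per}(A_s(G_\omega^\sigma))$ over $\mathcal{E}(n)$ and, as in the proof of Theorem \ref{T:coeffient_skew}, associate to each nonzero term $a_{1\pi(1)} \cdots a_{n\pi(n)}$ (with $\pi = r_1 r_2 \cdots r_t \in \mathcal{E}(n)$, necessarily fixed-point-free) a Sachs subgraph $U \in \mathcal{E}\mathscr{U}_n$: the transpositions among the $r_i$ give the $K_2$-components and the longer even cycles give the even cycles of $U$. The point is to track what each cycle of $\pi$ contributes. A transposition $(i\,j)$ contributes $a_{ij} a_{ji} = -\omega_{ij}^2$ (or $-\omega_{ji}^2$, depending on the arc direction), i.e. a factor $-1$ times the square of the weight of that edge — matching the exponent $\zeta(e;U) = 2$ for edges not on a cycle. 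For a cycle $r$ of even length $2\ell$, each of its two orientations $r^{\pm 1}$ contributes the product of the $2\ell$ matrix entries around the cycle; for an evenly oriented cycle this product (of the $\omega$'s and $-\omega$'s) equals $+\prod_{e}\omega_e$ and for an oddly oriented cycle it equals $-\prod_e \omega_e$, where each edge weight appears to the first power (exponent $\zeta(e;U)=1$). Since the $c(U)$ long cycles can each be traversed in two directions independently, $U$ arises from exactly $2^{c(U)}$ permutations $r_1^{\pm1}\cdots r_{c(U)}^{\pm1} r_{c(U)+1}\cdots r_t$, all contributing the same value. Collecting signs ($-1$ per single edge, $-1$ per oddly oriented cycle) and the weight product $\prod(U)$ defined as in Theorem \ref{T:Sachs_wgraph}, this yields formula \eqref{E:per_weight_oriented}.

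Finally, for the coefficients $a_k$ with $k$ even, I would use the standard fact that $(-1)^k a_k$ is the sum of the $k \times k$ principal subpermanents of $A_s(G_\omega^\sigma)$, and that these are in bijection with the induced subgraphs of $G$ on $k$ vertices. Each such principal submatrix is itself a generalized skew adjacency matrix (of the induced weighted oriented subgraph), so applying \eqref{E:per_weight_oriented} to each and summing over all $\binom{n}{k}$ choices gives \eqref{E:poly_weight_oriented}; and $a_k = 0$ for odd $k$ because $\mathcal{E}(k) = \emptyset$ when $k$ is odd, so every $k \times k$ principal subpermanent vanishes. I do not anticipate a genuine obstacle here, since the whole argument is parallel to Theorem \ref{T:coeffient_skew}; the only mildly delicate point is verifying that each edge weight enters with the correct exponent ($1$ on cycles, $2$ off cycles) and that the sign contributed by a cycle is $-1$ precisely when it is oddly oriented — and this sign computation is exactly the unweighted one, since setting all $\omega$'s to $1$ must recover Theorem \ref{T:coeffient_skew}.
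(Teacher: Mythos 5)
Your proposal is correct and follows exactly the route the paper intends: the paper gives no separate proof of Theorem~\ref{T:coeffient_skew_weight}, stating only that it follows ``by similar arguments as in the proof of Theorem~\ref{T:coeffient_skew},'' and your write-up is a faithful expansion of that argument, with the weight bookkeeping (exponent $2$ from $a_{ij}a_{ji}=-\omega_{ij}^2$ on single edges, exponent $1$ on cycle edges, and the sign $(-1)^{m(U)+c^-(U)}$) handled correctly.
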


As an application of Theorem~\ref{T:coeffient_skew_weight}, we present a graph-theoretic method to calculate the permanent and permanental polynomial of a skew symmetric matrix. Suppose that $A=(a_{ij})$ is an $n\times n$ real skew symmetric matrix. We can construct a weighted oriented graph $G_\omega^\sigma$ associated to $A$ as follows: let $\{v_1,v_2,\dots,v_n\}$ be the vertex set of $G_\omega^\sigma$ such that $v_i$ corresponds with the $i$-th row (and the corresponding column) of $A$, $(v_i,v_j)\in E(G_\omega^\sigma)$ if and only if $a_{ij}>0$, and we assign weight $a_{ij}$ to the arc $(v_i,v_j)$. An example is illustrated in Fig.~\ref{F:figure_1}.

\begin{figure}[tphb]
\begin{center}
\includegraphics[scale=0.6]{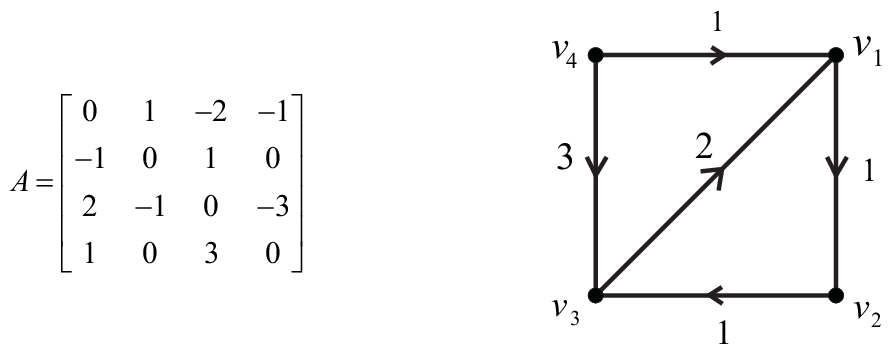}
\end{center}
\caption{A skew symmetric matrix $A$ and its associated weighted oriented graph $G_\omega^\sigma$.}
\label{F:figure_1}
\end{figure}

Clearly, the weighted oriented graph $G_\omega^\sigma$ associated to $A$ has the generalized skew adjacency matrix equal to $A$. Therefore, by Theorem~\ref{T:coeffient_skew_weight}, we can calculate the permanental polynomial of an arbitrary real skew symmetric matrix $A$ considered as the generalized skew adjacency matrix of a weighted oriented graph $G_\omega^\sigma$ (associated to $A$). For example, applying (\ref{E:poly_weight_oriented}) and (\ref{E:per_weight_oriented}) to the weighted oriented graph $G_\omega^\sigma$ associated to the $4\times4$ matrix $A$ in Fig.~\ref{F:figure_1}, we obtain $\mathrm{per}(xI_4-A)$ = $x^4-16x^2+4$ and $\mathrm{per}A=4$. It is not difficult to see that the above method can be modified to compute the permanent and permanental polynomial of a complex skew symmetric matrix.

%%%%%%%%%%%%%%%%%%%%%%%%%%%%%%%%%%%%%%%%%%%%%%%%%%%%%%%%%%%%%%%%%%%%%%%%%%%%%
\section{Graphs whose all orientations have the same permanental polynomial}\label{S:characterization}
Let $G$ be a simple graph with $m$ edges. Since each edge has two possible directions, it follows that $G$ has $2^m$ distinct orientations. It is of interest to know whether all the orientations of a graph can have the same permanental polynomial. The next theorem gives a characterization on graphs whose all orientations have the same permanental polynomial.

An {\em $r$-matching} in a graph $G$ is a set of $r$ edges, no two of which have a vertex in common. The number of $r$-matchings in $G$
will be denoted by $p(G,r)$.

\begin{theorem}
\label{T:all_orientations}
All orientations $G^\sigma$ of a graph $G$ have the same permanental polynomial if and only if $G$ has no even cycles.
\end{theorem}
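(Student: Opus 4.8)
The plan is to use the coefficient formula from Theorem~\ref{T:coeffient_skew}, which expresses each $a_k(G^\sigma)$ as a sum over Sachs subgraphs $U$ of $G$ on $k$ vertices with no odd cycles, where a single edge contributes $-1$, an evenly oriented cycle contributes $+1$, and an oddly oriented cycle contributes $-1$ (all multiplied by $2^{c(U)}$). The key observation is that the orientation $\sigma$ enters this formula \emph{only} through the quantity $c^{-}(U)$, the number of oddly oriented cycles of $U$. So $\pi(G^\sigma,x)$ is independent of $\sigma$ precisely when, for every Sachs subgraph $U$ with no odd cycles, the contribution $(-1)^{m(U)+c^{-}(U)}2^{c(U)}$ is the same for all orientations — equivalently, the parity of $c^{-}(U)$ is forced to be constant as $\sigma$ varies.

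For the ``if'' direction, suppose $G$ has no even cycles. Then any Sachs subgraph with no odd cycles can contain no cycles at all (a cycle in a Sachs subgraph is either odd or even, and even cycles are absent by hypothesis), so every $U \in \mathcal{E}\mathscr{U}_k$ is a matching. Then $c^{-}(U)=0$ and $c(U)=0$ for all such $U$, so $a_k(G^\sigma) = \sum_U (-1)^{m(U)} = (-1)^{k/2}p(G,k/2)$ for even $k$ and $0$ for odd $k$, which is manifestly independent of $\sigma$. (In fact this shows $\pi(G^\sigma,x)$ equals the matching polynomial of $G$ in this case.)

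For the ``only if'' direction, suppose $G$ contains an even cycle; I would take $C$ to be an even cycle of $G$, of length $2\ell$ say. The idea is to exhibit two orientations of $G$ that differ only by reversing a single edge $e$ of $C$, and show they have different permanental polynomials. Reversing one edge of $C$ flips $C$ from oddly to evenly oriented (or vice versa), and leaves the oriented-parity of every other even cycle unchanged if that cycle does not use $e$; a cleaner route is to compare a fixed orientation $\sigma$ with the orientation $\sigma'$ obtained by reversing the single arc $e$, and track how each term of $a_{2\ell}$ (or more robustly, a suitable coefficient) changes. Concretely, I would look at the coefficient $a_k$ for $k = 2\ell = |V(C)|$ and restrict attention to Sachs subgraphs supported on $V(C)$: the subgraph $C$ itself contributes $\pm 2$, while any other Sachs subgraph on those vertices contributes something whose orientation-dependence is more delicate. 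To avoid that delicacy, the standard trick is to choose $C$ to be a \emph{shortest} even cycle (or to induct), and to consider the induced subgraph on $V(C)$ together with the one Sachs subgraph structure that changes; even better, one can argue directly that reversing a single arc $e$ of $C$ changes $a_{2\ell}$ by a nonzero amount because the net effect on the sum $\sum_U (-1)^{m(U)+c^-(U)}2^{c(U)}$ over $U$ supported on $V(C)$ is exactly $2\cdot(\pm 2)$ coming from the term $U = C$ alone, provided the contributions of all other such $U$ cancel in the difference — which they do, since a $U \ne C$ on $V(C)$ that is affected by reversing $e$ must contain a cycle through $e$, and on the vertex set $V(C)$ the only cycle through all necessary vertices compatible with being a Sachs subgraph is handled by the shortest-cycle choice.

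The main obstacle is making the ``only if'' direction airtight: I must ensure the single even cycle $C$ contributes a term to some coefficient $a_k$ that genuinely does not get cancelled by contributions of other Sachs subgraphs whose orientation-parity also changes when I perturb $\sigma$. The robust way to handle this is: pick $C$ a shortest even cycle, set $k=2\ell=|V(C)|$, and observe that the only Sachs subgraphs of $G$ on exactly $V(C)$ are (a) $C$ itself and (b) collections of single edges and shorter cycles — but shorter even cycles on a subset of $V(C)$ would contradict minimality unless they are chords-free, and one checks that such configurations either do not span $V(C)$ or are orientation-independent. Thus in $a_{2\ell}(G^\sigma)$, the term from $C$ is the unique orientation-sensitive one among Sachs subgraphs on $V(C)$, so reversing an arc of $C$ flips it from $+(-1)^{?}2$ to $-(-1)^{?}2$, changing $a_{2\ell}$ and hence the permanental polynomial. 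I would also present the cleaner alternative used in the literature: order the even cycles by length and induct, or invoke the analogous characteristic-polynomial argument of Cavers et al.~\cite{CaCF12}, which goes through verbatim since only the combinatorial bookkeeping of $c^-(U)$, not the sign conventions of permanent versus determinant, is at issue.
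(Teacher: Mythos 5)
Your ``if'' direction is correct and is exactly the paper's argument: with no even cycles, every $U\in\mathcal{E}\mathscr{U}_k$ is a matching, so the coefficient formula of Theorem~\ref{T:coeffient_skew} loses all dependence on $\sigma$.

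The ``only if'' direction has a genuine gap. You restrict attention to Sachs subgraphs \emph{supported on $V(C)$}, but $a_{2\ell}$ sums over all Sachs subgraphs of $G$ on \emph{any} $2\ell$ vertices. Taking $C$ to be a shortest even cycle does guarantee (as in the paper) that every $U\in\mathcal{E}\mathscr{U}_{2\ell}$ is either an $\ell$-matching or a single $2\ell$-cycle, so the only orientation-sensitive terms come from $2\ell$-cycles --- but from \emph{all} $2\ell$-cycles of $G$, not just $C$. When you reverse a single arc $e$ of $C$, every $2\ell$-cycle through $e$ flips its parity, and the resulting change in $a_{2\ell}$ is $4\bigl(n_{-}(e)-n_{+}(e)\bigr)$, where $n_{\pm}(e)$ count evenly/oddly oriented $2\ell$-cycles through $e$. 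This can perfectly well be zero, in which case your perturbation detects nothing; the single-edge reversal only proves $n_{+}(e)=n_{-}(e)$ for every edge and every orientation, which is not yet a contradiction. The paper's proof recognizes precisely this and supplies the missing idea: an induction on $t$ showing that $n_{+}(e_1,\dots,e_t)=n_{-}(e_1,\dots,e_t)$ for every $t$-tuple of edges and every orientation (the inductive step compares $\sigma$ with the orientation obtained by reversing $e_{t+1}$ and uses the hypothesis for both). Only at $t=2\ell$, with $e_1,\dots,e_{2\ell}$ the edges of $C$ itself, does one reach the contradiction $1=0$, since $C$ is the unique $2\ell$-cycle containing all of its own edges. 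Your closing remark that one could ``invoke the analogous argument of Cavers et al.'' gestures at the right fix, but you do not carry it out, and the shortcut you actually propose does not close the cancellation issue.
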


\begin{proof}
The sufficiency can be easily seen from Theorem~\ref{T:coeffient_skew}. Now we are going to prove the necessity of this theorem by contradiction. We assume that all orientations $G^\sigma$ have the same permanental polynomial, and $G$ contains an even cycle.

Let $2l$ be the smallest length of an even cycle in $G$. It is easy to see that the Sachs subgraphs of $\mathcal{E}\mathscr{U}_{2l}$ are $l$-matchings or $2l$-cycles. Let $G^\sigma$ be an orientation of $G$. By Theorem~\ref{T:coeffient_skew}, we have
\begin{equation}\label{E:skew}
a_{2l}=(-1)^{l}p(G,l)+2\sum_{C\in\mathscr{C}}(-1)^{c^-(C)},
\end{equation}
where $\mathscr{C}$ is the set of all $2l$-cycles in $G$.

For an edge $e$, denote by $n_{+}(e)$ (resp. $n_{-}(e)$) the number of evenly (resp. oddly) oriented $2l$-cycles in $G$ containing $e$. We claim that $n_{+}(e)$ = $n_{-}(e)$. Suppose, to the contrary, that $n_{+}(e)\neq n_{-}(e)$. Consider the new orientation of $G$ obtained from $G^\sigma$ by only reversing the orientation of $e$. Then in Eq. (\ref{E:skew}) the contribution from the $l$-matchings and those $2l$-cycles not containing $e$ will be unaffected, whereas the contribution from $2l$-cycles containing $e$ equals $2(n_{+}(e)-n_{-}(e))$ and will be negated. It follows that $a_{2l}$ will change under this new orientation of $G$, which contradicts all orientations $G^\sigma$ have the same permanental polynomial.

For $t\in\{1,2,\dots,2l\}$, denote by $n_{+}(e_1,e_2,\dots,e_t)$ (resp. $n_{-}(e_1,e_2,\dots,e_t)$) the number of evenly (resp. oddly) oriented $2l$-cycles in $G$ containing all of $e_1,e_2,\dots,e_t$.

We claim that for each $t\in\{1,2,\dots,2l\}$, $n_{+}(e_1,e_2,\dots,e_t)$ = $n_{-}(e_1,e_2,\dots,e_t)$ for all orientations $G^\sigma$ and all edges $e_1,e_2,\dots,e_t$. We proceed by induction on $t$.

The case $t=1$ has been proved as above. Assume that the claim holds for $t<2l$. Let $G^\sigma$ be an orientation of $G$. For edges $e_1,e_2,\dots,e_t,e_{t+1}$ of $G$, denote by $n_{+}(e_1,e_2,\dots,e_t,\overline{e_{{t+1}}})$ (resp. $n_{-}(e_1,e_2,\dots,e_t,\overline{e_{{t+1}}})$) the number of evenly (resp. oddly) oriented $2l$-cycles in $G$ containing $e_1,e_2,\dots,e_t$, but not $e_{t+1}$.
It is easy to see that
\[
n_{+}(e_1,e_2,\dots,e_t)=n_{+}(e_1,e_2,\dots,e_t,e_{{t+1}})+n_{+}(e_1,e_2,\dots,e_t,\overline{e_{{t+1}}}),
\]
and
\[
n_{-}(e_1,e_2,\dots,e_t)=n_{-}(e_1,e_2,\dots,e_t,e_{{t+1}})+n_{-}(e_1,e_2,\dots,e_t,\overline{e_{{t+1}}}).
\]

Consider the orientation $G^{\tilde{\sigma}}$ obtained from $G^{\sigma}$ by only reversing the orientation of $e_{t+1}$. Then
\[
\tilde{n}_{+}(e_1,e_2,\dots,e_t)=n_{-}(e_1,e_2,\dots,e_t,e_{t+1})+n_{+}(e_1,e_2,\dots,e_t,\overline{e_{{t+1}}}),
\]
and
\[
\tilde{n}_{-}(e_1,e_2,\dots,e_t)=n_{+}(e_1,e_2,\dots,e_t,e_{t+1})+n_{-}(e_1,e_2,\dots,e_t,\overline{e_{{t+1}}}).
\]

By the induction hypothesis, we have
\[
n_{+}(e_1,e_2,\dots,e_t)=n_{-}(e_1,e_2,\dots,e_t),
\]
and
\[
\tilde{n}_{+}(e_1,e_2,\dots,e_t)=\tilde{n}_{-}(e_1,e_2,\dots,e_t).
\]

Thus, we have
\begin{eqnarray*}
&\quad n_{+}(e_1,e_2,\dots,e_t,\overline{e_{{t+1}}})-n_{-}(e_1,e_2,\dots,e_t,\overline{e_{{t+1}}})\\
&= n_{-}(e_1,e_2,\dots,e_t,e_{t+1})-n_{+}(e_1,e_2,\dots,e_t,e_{t+1})\\
&= n_{+}(e_1,e_2,\dots,e_t,e_{t+1})-n_{-}(e_1,e_2,\dots,e_t,e_{t+1}).
\end{eqnarray*}

This gives $n_{+}(e_1,e_2,\dots,e_t,e_{t+1})$ = $n_{-}(e_1,e_2,\dots,e_t,e_{t+1})$. The claim holds from the principle of induction.

Let $C$ be an even cycle of length $2l$ and $e_1,e_2,\dots,e_{2l}$ the edges of $C$. By the above claim, we have $n_{+}(e_1,e_2,\dots,e_{2l})=n_{-}(e_1,e_2,\dots,e_{2l})$ for any orientation $G^{\sigma}$. This is impossible, since one side of the equality is 1, and the other is 0.
\end{proof}

The argument of the proof of Theorem~\ref{T:all_orientations} is similar to the one used to prove Theorem~4.2 in \cite{CaCF12}, which states that all the skew adjacency matrices of a graph $G$ have the same characteristic polynomial if and only if $G$ has no even cycles. It is easy to see that if $G$ has no even cycles, then each block of $G$ is either a complete graph $K_2$ on two vertices or an odd cycle.

The {\em matching polynomial}~\cite{GoGu81} of a graph $G$ on $n$ vertices is defined by
\[
\mu(G,x)=\sum_{r\ge0}(-1)^{r}p(G,r)x^{n-2r}.
\]

By Theorems~\ref{T:coeffient_skew} and~\ref{T:all_orientations}, we immediately obtain the following interesting result.

\begin{corollary}\label{C:oddgraph_root}
A graph $G$ has no even cycles if and only if $\pi(G^\sigma,x)$ = $\mu(G,x)$ for any orientation $G^\sigma$ of $G$.
\end{corollary}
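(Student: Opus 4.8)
The plan is to prove Corollary~\ref{C:oddgraph_root} by combining Theorem~\ref{T:coeffient_skew} with Theorem~\ref{T:all_orientations}, via the explicit identity $a_{2l}=(-1)^{l}p(G,l)+2\sum_{C\in\mathscr{C}}(-1)^{c^-(C)}$ that already appears in the proof of Theorem~\ref{T:all_orientations}. First I would dispatch the easy direction. Suppose $G$ has no even cycles. Then for every $k$ the Sachs family $\mathcal{E}\mathscr{U}_k$ contains no graphs with cycles at all (any cycle in a Sachs subgraph of $\mathcal{E}\mathscr{U}_k$ is an even cycle of $G$), so $\mathcal{E}\mathscr{U}_k$ consists precisely of the $(k/2)$-matchings of $G$ when $k$ is even, and is empty when $k$ is odd. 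For such a $U$ we have $c(U)=c^-(U)=0$ and $m(U)=k/2$, so Theorem~\ref{T:coeffient_skew} gives $a_k=(-1)^{k/2}p(G,k/2)$ for $k$ even and $a_k=0$ for $k$ odd, which is exactly the coefficient of $x^{n-k}$ in $\mu(G,x)$. Hence $\pi(G^\sigma,x)=\mu(G,x)$ for every orientation $\sigma$, and this works regardless of which orientation is chosen.

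For the converse I would argue by contraposition: assume $G$ has an even cycle and produce an orientation $\sigma$ with $\pi(G^\sigma,x)\neq\mu(G,x)$. Let $2l$ be the length of a shortest even cycle of $G$. As noted in the proof of Theorem~\ref{T:all_orientations}, every member of $\mathcal{E}\mathscr{U}_{2l}$ is either an $l$-matching or a single $2l$-cycle, so for any orientation we have the displayed formula $a_{2l}=(-1)^{l}p(G,l)+2\sum_{C\in\mathscr{C}}(-1)^{c^-(C)}$, where $\mathscr{C}$ is the set of $2l$-cycles of $G$, while the coefficient of $x^{n-2l}$ in $\mu(G,x)$ is $(-1)^{l}p(G,l)$. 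Thus $\pi(G^\sigma,x)=\mu(G,x)$ would force $\sum_{C\in\mathscr{C}}(-1)^{c^-(C)}=0$ for every orientation, i.e. every orientation evenly/oddly orients the $2l$-cycles in balanced fashion. But by Theorem~\ref{T:all_orientations}, since $G$ has an even cycle, not all orientations share the same permanental polynomial; more directly, one can simply invoke Theorem~\ref{T:all_orientations} to get two orientations with different permanental polynomials, at least one of which therefore differs from $\mu(G,x)$. To make the implication airtight I would instead note: if $\pi(G^\sigma,x)=\mu(G,x)$ held for \emph{every} $\sigma$, then in particular all orientations would have the same permanental polynomial (namely $\mu(G,x)$), so by Theorem~\ref{T:all_orientations} $G$ would have no even cycle --- contradiction.

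The only subtlety, and the step I would be most careful about, is the logical shape of the two quantifiers: Corollary~\ref{C:oddgraph_root} asserts an equivalence between ``$G$ has no even cycle'' and ``$\pi(G^\sigma,x)=\mu(G,x)$ for \emph{all} orientations $\sigma$.'' The forward direction must genuinely hold for all $\sigma$ (it does, as the matching-polynomial computation above is orientation-independent), and the backward direction uses the hypothesis for all $\sigma$ to feed Theorem~\ref{T:all_orientations}. So the cleanest writeup is: ($\Leftarrow$) if $\pi(G^\sigma,x)=\mu(G,x)$ for all $\sigma$ then all orientations are per-cospectral, so Theorem~\ref{T:all_orientations} gives no even cycle; ($\Rightarrow$) if $G$ has no even cycle, the coefficient formula of Theorem~\ref{T:coeffient_skew} collapses term by term to the matching-polynomial coefficients for every orientation. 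No real obstacle remains once the quantifiers are handled correctly; the mild risk is sloppily conflating ``some orientation'' with ``every orientation,'' which would break the argument.
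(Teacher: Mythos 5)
Your proof is correct and follows exactly the route the paper intends: the paper derives this corollary directly from Theorems~\ref{T:coeffient_skew} and~\ref{T:all_orientations} without further detail, and your two directions (the coefficient formula collapsing to matching-polynomial coefficients when there are no even cycles, and invoking Theorem~\ref{T:all_orientations} for the converse) are precisely the intended argument, with the quantifier issue handled correctly.
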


\begin{remark}
{\rm
In \cite{ZhLL}, the authors showed that a non-empty graph $G$ has at least one complex permanental root. It is well-known that the roots of the matching polynomial of a graph are all real~\cite{GoGu81}. By Corollary \ref{C:oddgraph_root}, the permanental roots of any orientation $G^\sigma$ of a graph $G$ having no even cycles are all real.
}
\end{remark}

%%%%%%%%%%%%%%%%%%%%%%%%%%%%%%%%%%%%%%%%%%%%%%%%%%%%%%%%%%%%%%%%%%%%%%%%%%%%%
\section{Roots of the permanental polynomial}\label{S:permanental_root}
In~\cite{Bor85}, Borowiecki defined the \emph{per-spectrum} $S_p(G)$ of $G$ as the multiset of permanental roots of $G$. Analogously, the {\em per-spectrum} $S_p(G^\sigma)$ of $G^\sigma$ is defined as the multiset of permanental roots of $G^\sigma$. We use $S(G)$ to denote the adjacency spectrum of $G$. In this section, the relations among $S_p(G^\sigma)$, $S_p(G)$ and $S(G)$ are studied. Firstly, we present some necessary lemmas.

\begin{lemma}[\cite{MeRW81}]\label{T:Sachs_simplegraph}
Suppose $\pi(G,x)$ = $\mathrm{per}(xI-A(G))$ = $\sum_{k=0}^{n}a_{k}x^{n-k}$. Then
\[
a_k=(-1)^{k}\sum_{U\in \mathscr{U}_k}2^{c(U)},\quad 1\le k\le n.
\]
where  $\mathscr{U}_k$ is the set of all Sachs subgraphs of $G$ with exactly $k$ vertices, and $c(U)$ is the number of cycles in $U$.
\end{lemma}

\begin{lemma}[\cite{BoJo81}]\label{L:per_root_bipartite}
Let $G$ be a graph on $n$ vertices with
$\pi(G,x)=\sum_{k=0}^{n}a_{k}x^{n-k}$. Then $G$ is bipartite if and only if $a_{k}=0$ for all odd $k$.
\end{lemma}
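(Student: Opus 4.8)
The plan is to read the result straight off the Sachs-type coefficient formula for simple graphs, Theorem~\ref{T:Sachs_simplegraph}, which gives
\[
a_k=(-1)^{k}\sum_{U\in\mathscr{U}_k}2^{c(U)},\qquad 1\le k\le n,
\]
where $\mathscr{U}_k$ is the set of Sachs subgraphs of $G$ on exactly $k$ vertices and $c(U)\ge 0$ is the number of cycles of $U$. The crucial point — and the one I would emphasize — is that every summand $2^{c(U)}$ is strictly positive, so there is no cancellation: $a_k=0$ if and only if $\mathscr{U}_k=\emptyset$, i.e. $G$ has no Sachs subgraph on exactly $k$ vertices.

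Next I would record the elementary fact that a Sachs subgraph is a vertex-disjoint union of single edges ($K_2$'s) and cycles, and that a single edge and an even cycle each occupy an even number of vertices. Hence a Sachs subgraph on an \emph{odd} number of vertices must contain at least one odd cycle as a component; in particular its existence forces $G$ to contain an odd cycle.

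For the forward implication, suppose $G$ is bipartite. Then $G$ has no odd cycle, so by the previous paragraph $\mathscr{U}_k=\emptyset$ whenever $k$ is odd, and therefore $a_k=0$ for all odd $k$. For the converse I would argue by contraposition: if $G$ is not bipartite it contains an odd cycle $C$, say of length $\ell$ with $\ell$ odd; then $C$ by itself is a Sachs subgraph of $G$ on $\ell$ vertices, so $\mathscr{U}_\ell\neq\emptyset$, and by the positivity observation $a_\ell=(-1)^{\ell}\sum_{U\in\mathscr{U}_\ell}2^{c(U)}\neq 0$, contradicting the assumption that $a_k=0$ for all odd $k$.

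There is no genuine obstacle here; the only thing that needs care is the positivity of the weights $2^{c(U)}$, which is precisely what prevents the odd-vertex Sachs subgraphs from cancelling one another and makes the ``only if'' direction work. This is in contrast with the skew permanental polynomial, where the signs $(-1)^{m(U)+c^{-}(U)}$ appearing in Theorem~\ref{T:coeffient_skew} do allow cancellation, so an analogous statement cannot be obtained by the same one-line argument.
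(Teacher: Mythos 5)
Your proof is correct. Note that the paper itself does not prove this lemma at all --- it is imported from \cite{BoJo81} as a known result --- so there is no internal proof to compare against; your argument via the Sachs coefficient formula of Theorem~\ref{T:Sachs_simplegraph} (all weights $2^{c(U)}$ positive, hence no cancellation, plus the parity observation that a Sachs subgraph on an odd number of vertices must contain an odd cycle) is the standard and essentially unavoidable route, and every step checks out. The only cosmetic quibble is that you announce a proof by contraposition and then close it as a contradiction; either framing works, just pick one.
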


\begin{lemma}[\cite{Bor85}]\label{L:Bor}
A graph $G$ satisfies $S_p(G)$ = $iS(G)$ ($i^2=-1$) if and only if $G$ is a bipartite graph without cycles of length $4l$ ($l\ge1$).
\end{lemma}

From relations between the coefficients and roots of a polynomial, the following lemma is easy to verify and the proof is omitted.
\begin{lemma}\label{P:per_root}
Suppose $\pi(G,x)$ = $\sum\limits_{k=0}^na_{k}(G)x^{n-k}$ and $\pi(G^\sigma,x)$ = $\sum\limits_{k=0}^na_k(G^\sigma)x^{n-k}$. Then $S_p(G^\sigma)$ = $iS_p(G)$ ($i^2=-1$) if and only if $a_k(G^\sigma)$ = $a_{k}(G)$ = 0 if $k$ is odd, $a_k(G^\sigma)$ = $a_{k}(G)$ if $k\equiv0\pmod 4$, and $a_k(G^\sigma)$ = $-a_{k}(G)$ if $k\equiv2\pmod 4$.
\end{lemma}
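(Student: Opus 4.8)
The plan is to translate the multiset identity $S_p(G^\sigma)=iS_p(G)$ into a single polynomial identity, and then read off the stated coefficient relations. First I would record two elementary facts. Both $\pi(G,x)$ and $\pi(G^\sigma,x)$ are monic of degree $n$ (this is implicit in the coefficient forms, with $a_0=1$), and both have \emph{real} coefficients: for real $x$ the matrix $xI-A_s(G^\sigma)$ is real, so $\mathrm{per}(xI-A_s(G^\sigma))$ is a real polynomial in $x$. Writing $S_p(G)=\{\mu_1,\dots,\mu_n\}$ as a multiset, we have $\pi(G,x)=\prod_{j=1}^n(x-\mu_j)$.

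Since two monic polynomials are equal precisely when they have the same multiset of roots, $S_p(G^\sigma)=iS_p(G)$ holds if and only if $\pi(G^\sigma,x)=\prod_{j=1}^n(x-i\mu_j)$. The key step is the substitution identity obtained by pulling a factor $i$ out of each term: $\prod_{j=1}^n(x-i\mu_j)=i^n\prod_{j=1}^n(x/i-\mu_j)=i^n\,\pi(G,-ix)$, where $x/i=-ix$. Expanding, $i^n\,\pi(G,-ix)=\sum_{k=0}^n i^n(-i)^{n-k}a_k(G)\,x^{n-k}=\sum_{k=0}^n i^k a_k(G)\,x^{n-k}$, since $i^n(-i)^{n-k}=\bigl(i(-i)\bigr)^{n}(-i)^{-k}=i^k$. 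Comparing with $\pi(G^\sigma,x)=\sum_{k=0}^n a_k(G^\sigma)x^{n-k}$, we get that $S_p(G^\sigma)=iS_p(G)$ is equivalent to $a_k(G^\sigma)=i^k a_k(G)$ for every $k\in\{0,1,\dots,n\}$.

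It remains to unwind this single family of complex equalities into the three real conditions by splitting on $k$ modulo $4$, using that $a_k(G^\sigma)$ and $a_k(G)$ are real. For odd $k$, $i^k=\pm i$ is purely imaginary, so $a_k(G^\sigma)=i^k a_k(G)$ forces $a_k(G)=0$ and hence $a_k(G^\sigma)=0$; for $k\equiv0\pmod4$, $i^k=1$ gives $a_k(G^\sigma)=a_k(G)$; for $k\equiv2\pmod4$, $i^k=-1$ gives $a_k(G^\sigma)=-a_k(G)$. Conversely, these three conditions immediately return $a_k(G^\sigma)=i^k a_k(G)$ for all $k$, completing the equivalence. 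There is no genuine obstacle here, consistent with the paper's remark that the result is easy; the only points needing care are the bookkeeping with powers of $i$ (in particular $1/i=-i$) in the substitution, and the appeal to the reality of the coefficients to pass between the one complex identity and the three real ones.
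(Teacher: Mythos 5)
Your proof is correct. The paper itself omits the proof of this lemma (``The following result is easy to verify and the proof is omitted''), and your argument --- reducing the multiset identity to the polynomial identity $\pi(G^\sigma,x)=i^n\pi(G,-ix)$, reading off $a_k(G^\sigma)=i^ka_k(G)$, and then using the reality of the coefficients to split into the three cases modulo $4$ --- is exactly the natural verification the authors leave to the reader, with the powers of $i$ handled correctly.
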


\begin{theorem}\label{T:bipa_per_root}
There exists an orientation $G^\sigma$ of a graph $G$ such that $S_p(G^\sigma)=iS_p(G)$ ($i^2=-1$) if and only if $G$ is bipartite.
\end{theorem}

\begin{proof}
Suppose that $\pi(G,x)$ = $\sum\limits_{k=0}^na_{k}(G)x^{n-k}$ and $\pi(G^\sigma,x)$ = $\sum\limits_{k=0}^na_k(G^\sigma)x^{n-k}$. If there exists an orientation $G^\sigma$ of $G$ such that $S_p(G^\sigma)=iS_p(G)$, then from Lemma \ref{P:per_root}, $a_k(G)$ = $a_{k}(G^\sigma)$ = 0 for all odd $k$. By Lemma~\ref{L:per_root_bipartite}, $G$ is bipartite.

Conversely, we assume that $G$ is bipartite and $(X,Y)$ is a bipartition of $G$. Orient $G$ by directing all edges of $G$ toward $Y$. Denote this orientation by $G^\sigma$. If $G$ contains no cycles, then by Theorem~\ref{T:coeffient_skew} and Lemma~\ref{T:Sachs_simplegraph} we have $a_{2l}(G^\sigma)$ = $(-1)^{l} p(G,l)$ = $(-1)^{l}a_{2l}(G)$. So we assume that $G$ contains an even cycle. Let $C_{2l}$ be an even cycle in $G$ of length $2l$. Then $C_{2l}$ is oddly oriented if and only if $l$ is odd.

By Lemma~\ref{T:Sachs_simplegraph} and Theorem~\ref{T:coeffient_skew}, we have
\[
a_{2l}(G)=\sum_{U\in\mathscr{U}_{2l}}2^{c(U)}=p(G,l)+\sum_{U\in\mathscr{U}_{2l} \atop c(U)>0}2^{c(U)},
\]
and
\[
a_{2l}(G^\sigma)=(-1)^{l}p(G,l)+\sum_{U\in\mathcal{E}\mathscr{U}_{2l} \atop c(U)>0}
(-1)^{m(U)+c^{-}(U)}2^{c(U)}.
\]

Since $G$ is bipartite, it follows that $\mathscr{U}_{2l}$ = $\mathcal{E}\mathscr{U}_{2l}$. Let $U$ be a Sachs subgraph of $G$ on $2l$ vertices containing at least one cycle. Let $H_1$, $H_2$, $\dots$, $H_{c(U)}$ be all the cycles of $U$. Without loss of generality, we assume that $H_1$, $H_2$, $\dots$, $H_{c^{-}(U)}$ are oddly oriented relative to $G^\sigma$. Suppose that $l(H_j)=2(2s_j+1)$ for $j=1,2,\dots,c^{-}(U)$ and $l(H_j)=2(2s_j)$ for $j=c^{-}(U)+1,\dots,c(U)$, where $l(H_j)$ denotes the length of cycle $H_j$. Thus
\[
2m(U)+2\sum_{j=1}^{c^{-}(U)}(2s_j+1)+2\sum_{j=c^{-}(U)+1}^{c(U)}(2s_j) = 2l.
\]
It follows that $m(U)+c^{-}(U)\equiv l\pmod{2}$. Therefore,
\[
a_{2l}(G^\sigma)=(-1)^{l}p(G,l)+\sum_{U\in\mathcal{E}\mathscr{U}_{2l} \atop c(U)>0}(-1)^{l}2^{c(U)}
=(-1)^{l}a_{2l}(G).
\]

Since $G$ is bipartite, we have $a_{2l+1}(G)$ = 0 from Lemma~\ref{L:per_root_bipartite}. By Theorem~\ref{T:coeffient_skew}, we have $a_{2l+1}(G^\sigma)$ = 0. Thus $a_{2l+1}(G^\sigma)$ = $a_{2l+1}(G)$ = 0. Therefore $S_p(G^\sigma)=iS_p(G)$ from Lemma \ref{P:per_root}.
\end{proof}

By Theorems~\ref{T:all_orientations} and~\ref{T:bipa_per_root}, we immediately obtain the following corollary.

\begin{corollary}\label{T:forest_root}
For any orientation $G^\sigma$ of $G$, $S_p(G^\sigma)=iS_p(G)$ ($i^2=-1$) if and only if $G$ is a forest.
\end{corollary}

The next result gives a characterization on graphs satisfying $S_p(G^\sigma)$ = $S(G)$.

\begin{corollary}\label{C:forest_root}
For any orientation $G^\sigma$ of $G$, $S_p(G^\sigma)$ = $S(G)$ if and only if $G$ is a forest.
\end{corollary}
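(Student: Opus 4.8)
The plan is to reduce the statement to a polynomial identity. Writing $\phi(G,x):=\det(xI-A(G))$ for the ordinary characteristic polynomial of $G$, the multiset equality $S_p(G^\sigma)=S(G)$ holds if and only if $\pi(G^\sigma,x)=\phi(G,x)$, since both sides are monic polynomials of degree $n$ and a monic polynomial is determined by its multiset of roots. So the corollary is equivalent to: $\pi(G^\sigma,x)=\phi(G,x)$ for every orientation $G^\sigma$ of $G$ if and only if $G$ is a forest. I would route both implications through two facts that are essentially already available: (a) $G$ has no even cycles if and only if $\pi(G^\sigma,x)=\mu(G,x)$ for every orientation $G^\sigma$, which is exactly Corollary~\ref{C:oddgraph_root}; and (b) $\mu(G,x)=\phi(G,x)$ if and only if $G$ is a forest.

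For the ``if'' direction, suppose $G$ is a forest. Then $G$ has no even cycles, so Corollary~\ref{C:oddgraph_root} gives $\pi(G^\sigma,x)=\mu(G,x)$ for every orientation $G^\sigma$. Moreover, since a forest is acyclic, every Sachs subgraph of $G$ is a disjoint union of independent edges, so by Sachs' coefficient theorem for $\phi(G,x)$ (see~\cite{CvDS80}) one gets $\phi(G,x)=\sum_{r\ge0}(-1)^{r}p(G,r)x^{n-2r}=\mu(G,x)$. Hence $\pi(G^\sigma,x)=\mu(G,x)=\phi(G,x)$, i.e.\ $S_p(G^\sigma)=S(G)$, for every orientation $G^\sigma$. (Alternatively, this half follows from Theorem~\ref{T:forest_root} together with Borowiecki's theorem, using that the spectrum of a bipartite graph is symmetric about the origin.)

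For the ``only if'' direction, assume $S_p(G^\sigma)=S(G)$, equivalently $\pi(G^\sigma,x)=\phi(G,x)$, for every orientation $G^\sigma$. In particular all orientations of $G$ share the same permanental polynomial, so $G$ has no even cycles by Theorem~\ref{T:all_orientations}, and then Corollary~\ref{C:oddgraph_root} yields $\mu(G,x)=\pi(G^\sigma,x)=\phi(G,x)$. Now suppose for contradiction that $G$ is not a forest. As $G$ has no even cycles, every cycle of $G$ is odd, so a shortest cycle has odd length $g\ge3$; because $g$ is the girth, the only Sachs subgraphs of $G$ on $g$ vertices are its $g$-cycles (a matching on an odd number of vertices is impossible), whence by Sachs' theorem the coefficient of $x^{n-g}$ in $\phi(G,x)$ equals $-2$ times the number of $g$-cycles of $G$, which is nonzero. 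On the other hand the coefficient of $x^{n-g}$ in $\mu(G,x)$ is $0$, since $g$ is odd. This contradicts $\mu(G,x)=\phi(G,x)$, so $G$ must be a forest.

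The argument is in the end bookkeeping on top of Theorem~\ref{T:all_orientations} and Corollary~\ref{C:oddgraph_root}; the only point that needs a little care is fact (b), that $\mu(G,x)=\phi(G,x)$ characterizes forests. This is classical, but it is also self-contained above: the ``forest $\Rightarrow$ equality'' half is immediate from Sachs' theorem, and the converse is the short girth comparison carried out in the ``only if'' paragraph. One should also note at the outset that ``equal per-spectra for all orientations'' genuinely upgrades to ``equal permanental polynomials'', which is clear because each $\pi(G^\sigma,x)$ is monic of degree $n$. I do not expect a serious obstacle here.
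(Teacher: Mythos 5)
Your proposal is correct, and it takes a genuinely different route from the paper's. The paper proves the ``if'' direction by combining Theorem~\ref{T:forest_root} ($S_p(G^\sigma)=iS_p(G)$ for all orientations when $G$ is a forest) with Borowiecki's theorem ($S_p(G)=iS(G)$ for bipartite graphs without $4l$-cycles) and the symmetry of bipartite spectra, i.e.\ exactly the alternative you mention parenthetically; you instead go directly through $\pi(G^\sigma,x)=\mu(G,x)=\phi(G,x)$, which is self-contained and avoids the external result. For the ``only if'' direction both arguments start identically, invoking Theorem~\ref{T:all_orientations} to conclude that $G$ has no even cycles, but then diverge: the paper observes that $a_k(G^\sigma)=0$ for odd $k$ forces the odd coefficients of the characteristic polynomial to vanish, hence $G$ is bipartite and therefore acyclic, relying on the standard (uncited) fact that a graph is bipartite iff all odd coefficients of $\phi(G,x)$ vanish; you instead derive $\mu(G,x)=\phi(G,x)$ from Corollary~\ref{C:oddgraph_root} and exhibit a concrete nonvanishing odd coefficient of $\phi$ at the (odd) girth, which is the same underlying phenomenon but made explicit and verified by a direct Sachs-subgraph count. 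Your version buys self-containment at the cost of a slightly longer write-up; the paper's buys brevity by leaning on machinery already in place. All the individual steps in your argument check out, including the upgrade from equal per-spectra to equal monic polynomials and the observation that the only Sachs subgraphs on an odd number $g$ of vertices at the girth are the $g$-cycles.
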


\begin{proof}
If $G$ is a forest, then by Corollary~\ref{T:forest_root} and Lemma~\ref{L:Bor}, $S_p(G^\sigma)$ = $-S(G)$ for any orientation $G^\sigma$ of $G$. Since the adjacency spectrum of a bipartite graph is symmetric with respect to the origin, that is $S(G)=-S(G)$, we have $S_p(G^\sigma)$ = $S(G)$. Conversely, we assume that $S_p(G^\sigma)$ = $S(G)$ for any orientation $G^\sigma$ of $G$. From Theorem~\ref{T:all_orientations}, we know that $G$ has no even cycles. By Theorem~\ref{T:coeffient_skew}, $a_k(G^\sigma)=0$ for all odd $k$. Since $S_p(G^\sigma)$ = $S(G)$, we have $b_k(G)=0$ for all odd $k$, where $b_k(G)$ is the coefficient of $x^{n-k}$ in the characteristic polynomial of $G$. It implies that $G$ is bipartite. Thus $G$ is a forest.
\end{proof}

\begin{remark}
\rm
If $G$ is a bipartite graph without cycles of length $4l$ ($l\ge1$), then by Lemma~\ref{L:Bor} and Theorem~\ref{T:bipa_per_root} there exists an orientation $G^\sigma$ of $G$ such that $S_p(G^\sigma)$ = $S(G)$. The converse of this statement is not true. For example, we consider the cycle $C_4$ of length 4. It is easy to see that $S(C_4)$ = $\{-2,0,0,2\}$. If $C_4$ is oddly oriented, then by Theorem~\ref{T:coeffient_skew}, we have $\pi(C_4^\sigma,x)$ = $x^4-4x^2$ and $S_p(C_4^\sigma)$ = $\{-2,0,0,2\}$.
\end{remark}

%%%%%%%%%%%%%%%%%%%%%%%%%%%%%%%%%%%%%%%%%%%%%%%%%%%%%%%%%%%%%

\end{document}